\numberwithin{equation}{section}
\newcommand{\CC}{\mathbb{C}}
\newcommand{\DD}{\mathbb{D}}
\newcommand{\NN}{\mathbb{N}}
\newcommand{\RR}{\mathbb{R}}
\newcommand{\cF}{{\mathcal{F}}}
\renewcommand{\tilde}{\widetilde}
\newcommand{\bea}{\begin{eqnarray}}
\newcommand{\eea}{\end{eqnarray}}
\newcommand{\beqa}{\begin{eqnarray*}}
\newcommand{\eeqa}{\end{eqnarray*}}
\newcommand{\lra}{\longrightarrow}
\DeclareMathOperator{\Hol}{Hol}
\DeclareMathSymbol{\subsetneqq}{\mathbin}{AMSb}{36}
\DeclareMathOperator{\Pol}{Pol}
\DeclareMathOperator{\diam}{diam}
\newtheorem{theorem}{Theorem}[section]
\newtheorem{lem}[theorem]{{\bf Lemma}}
\newtheorem{coro}[theorem]{{\bf Corollary}}
\newtheorem{prop}[theorem]{{Proposition}}
\newtheorem{remark}[theorem]{{Remark}}
\title{Sampling constants in generalized Fock spaces}
\author{S. Konate \& M.-A. Orsoni}
\subjclass[2010]{30H20}
\keywords{Dominating sets, reverse Carleson measure, Fock space, sampling}
\thanks{The research of the first author is partially supported by Banque Mondiale via the Projet d'Appui
au D\'eveloppement de l'Enseignement Sup\'erieur du Mali. The research of the second author is partially supported by the project ANR-18-CE40-0035 and by the Joint French-Russian Research Project PRC CNRS/RFBR 2017--2019.}
\address{University of Segou, Mali}
\email{gnatiosia@gmail.com}
\address{Univ. Bordeaux, CNRS, Bordeaux INP, IMB, UMR 5251,  F-33400, Talence, France}  
\email{Marcu-Antone.Orsoni@math.u-bordeaux.fr}
\begin{document}
\begin{abstract}
We prove several results related to a Logvinenko-Sereda type theorem on dominating sets for generalized doubling Fock spaces. 
In particular, we give a precise polynomial dependence of the sampling constant on the relative density parameter $\gamma$ of the dominating set. Our method is an adaptation of that used in \cite{HKO} for the Bergman spaces and is based on a Remez-type inequality and a covering lemma related to doubling measures. 
\end{abstract}
\maketitle

\section{Introduction}
Sampling problems are central in signal theory and cover, for instance, sampling
sequences and so-called dominating sets which allow to recover the norm of a signal --- defined
by an integration over a given domain --- from the integration on a subdomain (precise definitions
will be given later).
They have been considered in a large variety of situations (see e.g. the survey \cite{FHR}), including the Fock space and its generalized versions (see \cite{Se, MMO, OC, Li, JPR, LZ}). 
In this paper we will focus on the second class of problems, i.e. dominating sets.
Conditions guaranteeing that a set is dominating were established rather long ago (in the 70's for the Paley-Wiener space and in the 80's for
the Bergman and Fock spaces, see e.g. the survey \cite{FHR} and references therein).
More recently, people got interested in estimates of the sampling constants which give quantitative information on the
tradeoff between the cost of the sampling and the precision of the estimates. The major paper
on this connection is by Kovrijkine \cite{Ko} who gave a method to consider this problem in the
Paley-Wiener space.
Subsequently, his method was adapted to several other spaces (see \cite{HJK, JS, HKO, GW, BJP}). 
In this paper, inspired by the methods in \cite{HKO},  we will discuss the case of generalized doubling
Fock spaces. For these, the paper \cite{MMO} provides a wealth of results that 
allow to translate the main steps of \cite{HKO} to this new framework. \\

{\bf Notations.} As usual, $A \lesssim B$ (respectively $A \gtrsim B$) means that there exists a constant $c>0 $ independent of the relevant variables such that $A \le c B$ (resp. $A \ge c B$). Similarly, $A \asymp B$ stands for $A \lesssim B$ and $A \gtrsim B$.

\subsection{Doubling subharmonic functions and generalized Fock spaces}
A function $\phi:\CC\lra \RR$ of class $C^2$ is said to be \emph{subharmonic} if $\Delta \phi \geq 0$, and \emph{doubling subharmonic} if it is subharmonic and the (non-negative) measure $\mu:=\Delta\phi$ is
doubling, i.e. there exists a constant $C$ such that for every $z\in\CC$ and $r>0$, 
\begin{equation}
\label{eq-doubling-measure}
 \mu(D(z,2r))\le C \mu(D(z,r)).
\end{equation}
Here $D(z,r)$ denotes the standard open euclidean disk of center $z\in \CC$ and radius $r >0$.
The minimal constant satisfying \eqref{eq-doubling-measure} is denoted by $C_\mu$ and called \emph{doubling constant} for $\mu$. 
A basic example of doubling subharmonic function is $\phi(z)=|z|^2$ for which $\mu=\Delta \phi$ is equal to $4$ times the Lebesgue measure and $C_\mu=4$. The reader may consider this simpler case for the time of the introduction.

Doubling subharmonic functions induce a new metric on the complex plane $\CC$ which is more adapted to the complex analysis we shall work with. We postpone the advanced results on doubling measures to Section \ref{sec-reminders} but let us introduce some basic objects related to it.  
First, we can associate with $\phi$ as above a function $\rho:\CC\lra \RR^+$, such that
\[
 \mu(D(z,\rho(z)))=1.
\]
To get an idea, assuming $\phi$ is suitably regularized, we have $\Delta\phi \asymp \rho^{-2}$ (see \cite[Theorem 14 and Remark 5]{MMO}).
Next, denote $D^r(z):=D(z,r\rho(z))$ the disk adapted to the new metric and write $D(z):=D^1(z)$ the disk with unit mass for the measure $\mu$. Finally, with these definitions in mind we can introduce the following natural density. A measurable set $E$ is \emph{$(\gamma,r)$-dense} (with respect to the above metric), if for every $z \in \CC$
\[
 \frac{|E\cap D^r(z)|}{|D^r(z)|}\ge \gamma. 
\]
Here $|F|$ denotes planar Lebesgue measure of a measurable set $F$. 
We will just say that the set is \emph{relatively dense} if there is some $\gamma>0$ and some
$r>0$ such that the set is $(\gamma,r)$-dense. We shall see in the next subsection that relative density is the right notion to characterize dominating sets.

Let $dA$ be the planar Lebesgue measure on $\CC$. For a doubling subharmonic function $\phi:\CC\lra \RR$ and $1 \le p<+\infty$, we will denote by $L^p_{\phi}(F)$ the Lebesgue space on a measurable set $F
\subset \CC$ with respect to the measure $e^{-p\phi}dA$. We will also use the notation 
$$\|f\|^p_{L^p_{\phi}(F)}:=\int_{F}|f|^pe^{-p\phi}dA.$$

Finally, we define the \emph{doubling Fock space}, by
\[
 \cF^p_{\phi}=\{f\in \Hol(\CC):\|f\|_{L^p_{\phi}(\CC)}^p:=\int_{\CC}|f|^pe^{-p\phi}dA<+\infty\}.
\] 
These spaces appear naturally in the study of the Cauchy-Riemann equation (see \cite{Christ1991, COC}) and are well studied objects (see e.g. \cite{OP2016} for Toeplitz operators on these spaces). When $\phi(z)=|z|^2$, $\cF^p_{\phi}$ is the classical Fock space (see the textbook \cite{Zhu} for more details).

\subsection{Dominating sets and sampling constants}
A measurable set $E\subset \CC$ will be called \emph{dominating} for $\cF^p_{\phi}$ if there exists $C>0$ such that
\bea\label{dom}
 \int_E|f|^pe^{-p\phi}dA\ge C^p \int_{\CC}|f|^p e^{-p\phi}dA, \quad \forall f \in \cF^p_{\phi}.
\eea
The biggest constant $C$ appearing in \eqref{dom} is called the \emph{sampling constant}. 

The notion of dominating sets can be defined for several spaces of analytic functions and their characterization has been at the center of intensive research starting with a famous result of Panejah \cite{Pa1, Pa2}, and Logvinenko and Sereda \cite{LS} for the Paley-Wiener space, which consists of all functions in $L^2$ whose Fourier transform is supported on a fixed compact interval. For corresponding results in the Bergman space we refer to \cite{Lu1, Lu2, Lu3, GW}. As for Fock spaces, this has been done by Jansen, Peetre and Rochberg in \cite{JPR} for the classical case $\varphi(z)=|z|^2$ and by Lou and Zhuo in \cite[Theorem A]{LZ} for generalized doubling Fock spaces (see also \cite{OC} for a characterization of general sampling measures). 
It turns out that a set $E$ is dominating for the doubling Fock spaces if and only if it is relatively dense. This characterization also holds for the other spaces of analytic functions mentioned above with an adapted notion of relative density.

Once the dominating sets have been characterized in terms of relative density, a second question of interest is to know whether the sampling constants can be estimated in terms of the density parameters $(\gamma, r)$. Kovrijkine answered this question in \cite{Ko} giving a sharp estimate on the sampling constants for the Paley-Wiener spaces, with a polynomial dependence on $\gamma$ (see also \cite{Rez2010} for sharp constants in some particular geometric settings). Such a dependence is used for example in control theory (see \cite{EV, ES, BJP} and the references therein). Kovrijkine's method involves Remez-type and Bernstein inequalities. It has been used in several spaces in which the Bernstein inequality holds (see for instance \cite{HJK} for the model space, \cite{BJP} for spaces spanned by Hermite functions, or \cite{ES} for general spectral subspaces), and also to settings where a Bernstein inequality is not at
hand (e.g. Fock and polyanalytic Fock spaces \cite{JS} or Bergman spaces \cite{HKO, GW}). In \cite{HKO}, the authors developed a machinery based on a covering argument to circumvent Bernstein's inequality and the aim of this paper is to adapt this method to doubling Fock spaces.

\subsection{Main results}
It can be deduced from Lou and Zhuo's result \cite[Theorem A]{LZ} that if $E$ is $(\gamma, r)$-dense then there exist some constants $0<\varepsilon_0 <1$ and $c >0$ depending only on $r$ such that inequality \eqref{dom} holds for every 
$$C^p \le c \gamma \varepsilon_0^{\frac{2(p+2)}{\gamma}}.$$
This last estimate gives an exponential dependence on $\gamma$. In the spirit of the work \cite{Ko}, we improve it providing a polynomial estimate in $\gamma$ with a power depending suitably on $r$. 
\begin{theorem}\label{thm1}
Let $\phi$ be a doubling subharmonic function and $1\le p<+\infty$. 
Given $r>1$, 
there exists $L$ such that for every measurable set 
$E\subset \CC$
which is $(\gamma,r)$-dense, we have
\bea\label{estim1}
  \|f\|_{L^p_{\phi}(E)} \ge \left(\frac{\gamma}{c}\right)^L \|f\|_{L^p_{\phi}(\CC)}
\eea
for every $f\in \cF^p_{\phi}$.
Here, the constants $c$ and $L$ depend on $r$,
and for $L$ we can choose
\begin{equation*}
 L
\lesssim
r^{\log_2(C_\mu)} + \frac{1}{p}(1 +  \log(r))
\end{equation*}
where $C_\mu$ is the doubling constant (see \eqref{eq-doubling-measure}) and the implicit constant depends only on the space (i.e.  only on $\phi$). 
\end{theorem}
\begin{remark}
When $\phi(z)=|z|^2$, we have $\log_2(C_\mu)=2$ and we get the same result as that given in \cite[Theorem 4.6]{JS} for the classical Fock space with an explicit dependence on $p$ in addition.
\end{remark}

Observe that we are mainly interested in the case when $r$ is big, for instance $r\ge 1$ (in case $E$ is $(\gamma,r)$-dense for some $r<1$ we can also show that $E$ is $(\tilde{\gamma},1)$-dense for some $\tilde{\gamma} \asymp \gamma$ where underlying constants are universal).\\

The proof of Theorem \ref{thm1} follows the scheme presented in \cite{HKO}. We will recall the necessary results
from that paper. The main new ingredients come from \cite{MMO} and concern a finite overlapping property
and a lemma allowing us to express the subharmonic weight 
locally as (the real part of) a holomorphic function.

We should mention that in \cite[Theorem 7]{LZ}, in the course of proving that the relative density is a necessary condition for domination, it is shown that $\gamma \gtrsim C^p$. Hence, we cannot expect better than a polynomial dependence in $\gamma$ of the sampling constant $C$. In this sense, our result is optimal. \\

As noticed in a remark in \cite[p.11]{Lu1} and after Theorem 2 in \cite{HKO} for the Bergman space, there is no reason 
a priori why a holomorphic function for which the integral $\int_E|f|^pe^{-p\phi}dA$ 
is bounded for a relative dense set $E$ should be in $\cF^p_{\phi}$. 
Outside the class $\cF^p_{\phi}$ relative density is 
in general not necessary for domination. For this reason, we always assume that we test on functions $f \in \cF^p_{\phi}$. 
\\

As a direct consequence of Theorem \ref{thm1}, we obtain a bound for the norm of the inverse of a Toeplitz operator $T_\varphi$. We remind that for any bounded measurable function $\varphi$, the Toeplitz operator $T_\varphi$ is defined on $ \cF^2_{\phi}$ by $T_\varphi f = {\bf{P}}(\varphi f)$ where $\bf{P}$ denotes the orthogonal projection from $L^2_\phi(\CC)$ onto $ \cF^2_{\phi}$. As remarked in \cite[Theorem B]{LZ}, for a non-negative function $\varphi$, $T_\varphi$ is invertible if and only if  $E_s = \left\{z \in \mathbb{C} :  \varphi(z) > s \right\}$ is a dominating set for some $s>0$. Tracking the constants we obtain 
\begin{coro}
\label{cor1}
Let $\varphi$ be a non-negative bounded measurable function. 
The operator $T_\varphi$ is invertible if and only if there exists $s>0$ such that $E_s = \left\{z \in \mathbb{C} :  \varphi(z) > s \right\}$ is $(\gamma, r)$-dense for some $\gamma > 0$ and $r >0$. 
In this case, we have 
$$||T_\varphi^{-1}|| \leq \frac{\| \varphi\|^{-1}_{\infty}}{1-\sqrt{1-\left(\frac{s}{\|\varphi\|_{\infty}}\right)^2 \left(\frac{\gamma}{c}\right)^{2L}}} .$$ 
\end{coro}
Notice that the right-hand side behaves as $\gamma^{-2L}$ as $\gamma \to 0$.
For the sake of completeness, we will give a proof of the reverse implication at the end of Section \ref{proof}.\\

The paper is organized as follows. In Section \ref{sec-reminders} we recall several results from \cite{MMO} concerning doubling measures and subharmonic functions. In Section \ref{sec-Remez}, we introduce planar Remez-type inequalities which will be a key ingredient in the proof of Theorem \ref{thm1}. Finally, we prove the covering lemma and Theorem \ref{thm1} in Section \ref{proof} and deduce Corollary \ref{cor1}.

\section{Reminders on doubling measures \label{sec-reminders}}
A non-negative doubling measure $\mu$ (see \eqref{eq-doubling-measure} for the definition) determines a metric on $\CC$ to which the usual notions have to be adapted.  
In this section, we recall several geometric measure theoretical tools in connection with doubling measures that we will need later and which come essentially from the paper \cite[Chapter 2]{MMO}. Actually, the results of the present section can be translated in terms of the distance induced by the metric $\rho(z)^{-2} dz \otimes d\bar{z}$ but we will not exploit this point of view here.

We start with a standard geometric estimate whose main part is due to \cite[Lemma 2.1]{Christ1991}. 
\begin{lem}{\cite[Lemma 1]{MMO}}
\label{lem-1}
Let $\mu$ be a doubling measure on $\CC$. For any disks $D(z, r)$ and $D(z', r')$ such that $r>r'$ and $z' \in D(z, r)$, we have 
$$\frac{1}{2 C_\mu} \left(\frac{r}{r'}\right)^{\kappa}\le \frac{\mu(D(z,r))}{\mu(D(z',r'))}\le C_\mu^2 \left(\frac{r}{r'}\right)^{\log_2(C_\mu)}.$$
where $\kappa=\frac{1}{\lceil C_{\mu}^2 + 2 \rceil}$. 
\end{lem}
Here $\lceil x \rceil$ denotes the smallest integer bigger or equal to $x$ and correspondingly, $\lfloor x \rfloor$ is the biggest integer less or equal to $x$. 
The paper \cite{MMO} refers to \cite{Christ1991} for the proof of this result. We mention that this latter paper only contains the left hand side estimate. For the convenience of the reader and in order to better understand the constants, we provide a detailed proof completing Christ's argument.   
\begin{proof}
Let us start with the right hand side inequality. 
By the triangular inequality, we have $D(z,r) \subset D(z', 2r)$. 
Now, note that 
$$2r= 2^{\log_2\left(\frac{r}{r'}\right)+1} r' \le 2^{\left\lceil \log_2\left(\frac{r}{r'}\right) \right\rceil +1} r'.$$
Hence iterating the doubling inequality \eqref{eq-doubling-measure}, we get 
$$\mu(D(z,r)) \le \mu(D(z', 2r)) \le C_\mu^{\left\lceil \log_2\left(\frac{r}{r'}\right) \right\rceil +1} \mu(D(z',r')) \le C_\mu^2 \left(\frac{r}{r'}\right)^{\log_2\left(C_\mu\right)} \mu(D(z',r')).$$

As mentioned above, the left hand side inequality is given in \cite[Lemma 2.1]{Christ1991}. We reproduce the proof here making the constant $\kappa$ more precise. 
Since $z' \in D(z,r)$, we have $D(z', r) \subset D(z, 2r)$ and so 
\begin{equation}
\label{eq-zz'}
\frac{\mu(D(z,r))}{\mu(D(z',r'))} \ge  \frac{C_\mu^{-1}\mu(D(z,2r))}{\mu(D(z',r'))} \ge \frac{C_\mu^{-1}\mu(D(z',r))}{\mu(D(z',r'))}.
\end{equation}
Hence it suffices to prove that for any $z' \in \CC$,
$$\frac{\mu(D(z',r))}{\mu(D(z',r'))} \ge \frac{1}{2} \left(\frac{r}{r'}\right)^{\kappa}$$
whenever $r >r'$.
Let $k \ge 3$ be an integer to be fixed later. 
First assume that $ \frac{r}{r'}=2^k$.
Then we can construct pairwise disjoint disks $D_1, D_2, \dots D_{k-2}$ such that $D_j$ has radius $2^jr'$, $D(z',r') \subset 3 D_j$ and $D_j \subset D(z',r)$. A way to do that is to choose the disks $D_j$ centered along a radius of $D(z',r)$ and ordered in such a way that the radius increases away from $z'$ (see \bsc{Figure} \ref{fig-Christ}). 

\begin{figure}[h!]
\begin{center}
\includegraphics[scale=0.5]{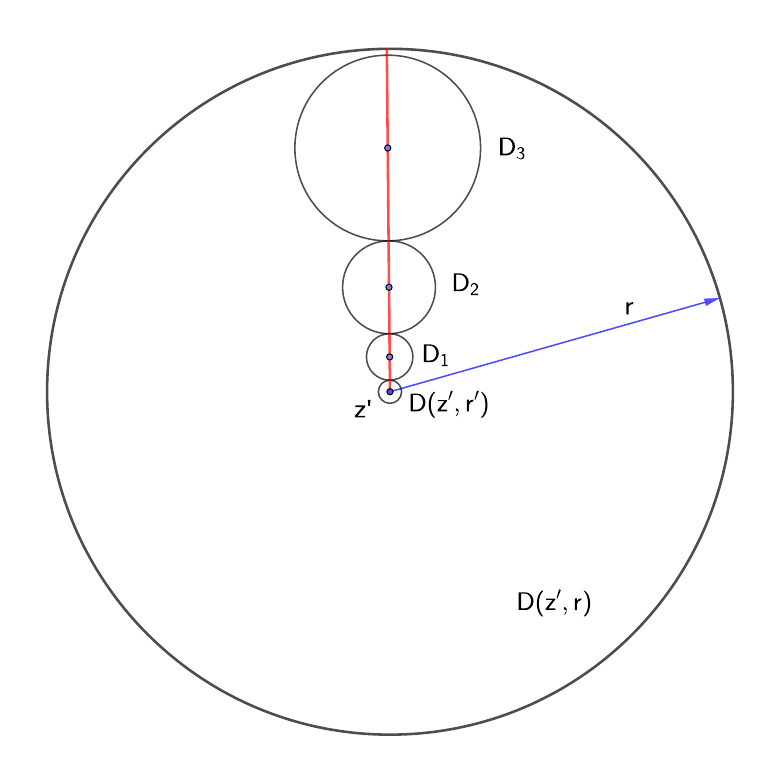}
\end{center}
\caption{The disks $D_j$, $D(z', r')$ and $D(z', r)$. \label{fig-Christ}}
\end{figure}

Now, since $\mu(D(z',r')) \le \mu(3D_j) \le C_\mu^2 \mu(D_j)$, we get 
$$\mu(D(z',r)) \ge \sum_{j=1}^{k-2} \mu(D_j) + \mu(D(z', r')) \ge  [(k-2) C_\mu^{-2} + 1] \mu(D(z',r')). $$
Hence, fixing $k$ to be the smallest integer such that $(k-2) C_\mu^{-2} +1 \ge 2$ i.e $k = \lceil C_\mu^2 + 2 \rceil$, we obtain 
$$\mu(D(z',r)) \ge 2 \mu(D(z',r'))$$
whenever $\frac{r}{r'}=2^k$. 

Let us treat the general case $r >r'$ now.  
If $\frac{r}{r'}\ge 2^k$, we reduce the problem to the previous setting decomposing $D(z',r)$ into a chain of disks $$D(z',r')=D(z',r_0)\subset D(z', r_1) \subset \dots \subset D(z', r_m) \subset D(z',r)$$
where $r_i=2^k r_{i-1}$ for $1\le i \le m$, and $m=\left\lfloor \frac{\log_2(r/r')}{k} \right\rfloor$. 
Hence we can iterate the process and get for every $z' \in \CC$,
\begin{align*}
\mu(D(z',r)) \ge \mu(D(z', r_m)) \ge 2 \mu(D(z', r_{m-1})) \ge \dots &\ge 2^{m} \mu(D(z', r')) \\
&=  2^{\left\lfloor \log_2\left(\frac{r}{r'}\right)/ k \right\rfloor} \mu(D(z',r'))\\
& \ge \frac{1}{2} \left(\frac{r}{r'}\right)^{1/k} \mu(D(z',r')).
\end{align*}
Combined with \eqref{eq-zz'}, this leads to the left hand side inequality of Lemma \ref{lem-1} with 
$$\kappa = \frac{1}{k}= \frac{1}{\lceil C_{\mu}^2 + 2 \rceil}.$$ 

If $\frac{r}{r'}<2^k$, then it is clear that $\frac{1}{2} \left(\frac{r}{r'}\right)^\kappa < 1 \le \frac{\mu(D(z',r))}{\mu(D(z',r'))}$ since $r>r'$. Again, combined with \eqref{eq-zz'}, this yields the expected constant in the left hand side inequality. 
\end{proof}

\begin{remark} 
\label{rmk-constants}
If instead of $z' \in D(z, r)$, we have $D(z,r)\cap D(z',r') \neq \emptyset$ as in \cite[Lemma 1]{MMO}, we get the same inequalities with $C_\mu$ replaced by $C_\mu^2$ in the left hand inequality 
and $C_\mu^2$ replaced by $C_\mu^3$ in the right hand inequality. Also, when $z=z'$, we can remove $C_\mu$ from the left hand side and 
replace $C_\mu^2$ by $C_\mu$ in the right hand side.   
\end{remark}

\begin{remark} 
In the classical Fock space, $\mu$ is the Lebesgue measure (up to a multiplicative constant) and so $C_\mu = 4$. Note that in this case, we obtain an equality with the right hand side and the factor $C_\mu^2$ disappears. This is due to the invariance by translation and the homogeneity of Lebesgue measure. 
\end{remark}

As a direct consequence of Lemma \ref{lem-1} and Remark \ref{rmk-constants}, picking $z'=z$, and replacing $r$ by $r \rho(z)$ and $r'$ by $\rho(z)$, we get the following useful estimate:
for all $z \in \CC$ and $r >1$, 
\begin{equation}
\label{eq-measure}
\frac{1}{2} r^{\kappa} \le 
\mu (D^{r}(z)) \le C_\mu r^{\log_2\left(C_\mu\right)}.
\end{equation}

We are now looking for an upper bound and a lower bound for $\frac{\rho(z)}{\rho(w)}$ whenever $w \in D^r(z)$. 

\begin{lem}
\label{lemma-rho-bounds}
For every $z \in \CC$, we have 
\begin{equation}
\label{eq-estimate-rho}
\forall r >0, \forall w \in D^r(z), \  \frac{\rho(z)}{\rho(w)} \ge \frac{1}{1+r} .
\end{equation}
and 
\begin{equation}
\label{eq-upperbound}
\forall r >1, \forall w \in D^r(z), \  \frac{\rho(z)}{\rho(w)} \lesssim \max\left[ r^{\frac{\log_2(C_\mu)}{\kappa} -1}, 1\right] = r^{\max\left(\frac{\log_2(C_\mu)}{\kappa} -1 , \ 0\right)}.
\end{equation}
\end{lem}

\begin{proof}
A classical fact is that $\rho$ is a $1$-Lipschitz function (see \cite[equation 2.4]{OP2016} for a simple proof): 
$$|\rho(z)-\rho(w)| \le |z-w|, \quad \forall z, w \in \CC.$$
Hence, for every $z \in \CC$, we get inequality \eqref{eq-estimate-rho} by the triangular inequality.
To obtain the upper bound, we use both inequalities of Lemma \ref{lem-1} distinguishing the cases $r \rho(z) \ge \rho(w)$ and $r \rho(z) < \rho(w)$. We get for every $w \in D^r(z)$,
$$ \frac{r \rho(z)}{\rho(w)} \lesssim \max\left[ \mu(D^r(z))^{1/\kappa}, \mu(D^r(z))^{1/\log_2(C_\mu)} \right].$$
Therefore inequality \eqref{eq-measure} implies for $r>1$
$$ \frac{r \rho(z)}{\rho(w)} \lesssim \max \left[r^{\frac{\log_2(C_\mu)}{\kappa}}, r\right],$$
and hence \eqref{eq-upperbound} follows.
\end{proof}
Notice that for $C_\mu$ large enough, we have $\frac{\log_2(C_\mu)}{\kappa} -1 \ge 0$, which means that the maximum in the last inequality \eqref{eq-upperbound} is equal to $r^{\frac{\log_2(C_\mu)}{\kappa} -1}$. This holds exactly when $C_\mu \ge  \sqrt[4]{2}$. 

\bigskip

In the proof of our main theorem, we will need a particular covering of the complex plane. Let us explain how we can construct it. 
We say that a sequence $(a_n)_{n \in \NN}$ is \emph{$\rho$-separated} if there exists $\delta >0$ such that 
$$|a_i - a_j| \geq \delta \max{(\rho(a_i), \rho(a_j))}, \quad \forall i \neq j. $$
This means that the disks $D^\delta (a_n)$ are pairwise disjoint.
We will cover $\CC$ by disks satisfying a finite overlapping property and such that the sequence formed by their centers is $\rho$-separated. 
In \cite{MMO}, the authors construct a decomposition of $\CC$ into
rectangles $R_k$: $\CC=\bigcup_k R_k$, and two such rectangles
can intersect at most along sides. For these rectangles, so-called quasi-squares, there exists a constant $e >1$ depending only on $C_\mu$ such that the ratio of sides of every $R_k$ lies in the interval $[1/e, \, e]$. 
Denoting by $a_k$ the center of $R_k$, Theorem 8(c) of \cite{MMO}
claims in particular that there is $r_0\ge 1$, such that for every $k$,
\[
 \frac{\rho(a_k)}{r_0}\le \diam R_k\le r_0\rho(a_k).
\]
In other words
\bea D^{1/(2C r_0)}(a_k) \subset R_k \subset D^{r_0 /2}(a_k)
\eea
with $C=\sqrt{1+e^2}$. 
As a consequence we get the required covering $\CC=\bigcup D^{r_0/2}(a_k)$ and the sequence $(a_k)$ is $\rho$-separated (take $\delta=1/(2Cr_0)$). 

\bigskip

We end this section by focusing on the particular case of a doubling measure $\mu$ given by the Laplacian $\Delta \phi$ of a subharmonic function $\phi$ which is the case of interest in this paper. We will need to control the value of $\phi$ in a disk by the value at its center. For this, we can use \cite[Lemma 13]{MMO} that we state as follows: for every $\sigma>0$, there exists $A=A(\sigma)>0$ such that for all $k\in \NN$, 
\begin{equation}
\label{estim}
\sup_{z \in D^{\sigma}(a_k)} |\phi(z)-\phi(a_k)-\mathsf{h}_{a_k}(z)| \leq A(\sigma)
\end{equation}
where $\mathsf{h}_{a_k}$ is a harmonic function in $D^\sigma(a_k)$ with $\mathsf{h}_{a_k}(a_k)=0$. Moreover, in view of the proof of \cite[Lemma 13]{MMO} we have 
\begin{equation}
\label{eq-A-sigma}
A(\sigma) \lesssim \sup_{k \in \NN} \mu\left(D^\sigma(a_k)\right) \lesssim \sigma^{\log_2\left(C_\mu\right)}
\end{equation}
where the inequalities come from \cite[Lemma 5(a)]{MMO} and the estimate $\eqref{eq-measure}$. This last result will allow us to translate the subharmonic weight locally into a holomorphic function.

\section{Remez-type inequalities \label{sec-Remez}}
Let us introduce a central result of the paper \cite{AR}.
Let $G$ be a (bounded) domain in $\CC$ and let $0<s<|\overline{G}|$ (Lebesgue measure of $\overline{G}$).
Denoting $\Pol_n$ the space of complex polynomials of degree at most $n\in\NN$, 
we introduce the set
\[
 P_n(\overline{G},s)=\{p\in \Pol_n:|\{z\in \overline{G}:|p(z)|\le 1\}|\ge s\}.
\]
Next, let
\[
 R_n(z,s)=\sup_{p\in P_n(\overline{G},s)}|p(z)|.
\]
This expression gives the biggest possible value at $z$ of a polynomial $p$ of degree at most $n$
and being at most $1$ on a set of measure at least $s$. In particular
Theorem 1 from \cite{AR} claims that for $z\in \partial G$, we have
\bea\label{AR}
 R_n(z,s)\le \left(\frac{c}{s}\right)^n
\eea
where the constant $c$ depends only on the (square of the) diameter of $G$. 
This result corresponds to a generalization to the two-dimensional case of the 
Remez inequality which is usually
given in dimension 1.
In what follows we will essentially consider $G$ to be a disk or a rectangle. 
By the maximum modulus principle, the above constant gives an upper estimate on $G$ for 
an arbitrary polynomial of degree at most $n$ which is bounded by one on a set of
measure at least $s$. Obviously, if this set is small ($s$ close to $0$), i.e. $p$ is controlled by 1 on a 
small set, then the estimate has to get worse.
\\

\begin{remark}\label{rem1}
Let us make another observation. If $c$ is the constant in \eqref{AR} associated with 
the unit disk $G=\DD=D(0,1)$, then a simple argument based on homothety shows that
the corresponding constant for an arbitrary disk $D(0,r)$ is $cr^2$ (considering $D(0,r)$ as underlying domain, the constant $c$ appearing in \cite[Theorem 1]{AR} satisfies {$c>2m_2(D(0,r))$}). So, in the sequel we will use the estimate
\bea
\label{AR1}
 R_n(z,s)\le \left(\frac{cr^2}{s}\right)^n,
\eea
where $c$ does not depend on $r$.
\end{remark}

Up to a translation, the following counterpart of Kovrijkine's result for the planar case has been 
given in \cite{HKO}:
\begin{lem}\label{Kovr-2D}
Let $0<r<R$ be fixed. There exists a constant $\eta>0$ such that the following holds. 
Let $w \in \mathbb{C}$, let 
$E\subset D^r(w)$ be a planar measurable set of positive measure and let $z_0 \in D^r(w)$. For every  $\phi$ analytic in $D^R(w)$, if $|\phi(z_0)|\ge 1$ and 
$M=\sup_{z\in D^R(w)}|\phi(z)|$ then
\[
 \sup_{z\in D^r(w)}|\phi(z)|\le \left( \frac{cr^2\rho(w)^2}{|E|}\right)^{\eta \log M}\sup_{z\in E}|\phi(z)|,
\]
where $c$ does not depend on $r$, and
\[
 \eta \le c''\frac{R^4}{(R-r)^4}\log\frac{R}{R-r}
\]
for an absolute constant $c''$.
\end{lem}

The corresponding case for $p$-norms is deduced exactly as in Kovrijkine's work.

\begin{coro}\label{CoroKov}
Let $0<r<R$ be fixed. There exists a constant $\eta>0$ such that the following holds. 
Let $w \in \mathbb{C}$, let 
$E\subset D^r(w)$ be a planar measurable set of positive measure and let $z_0 \in D^r(w)$. For every  $\phi$ analytic in $D^R(w)$, if $|\phi(z_0)|\ge 1$ and 
$M=\sup_{z\in D^R(w)}|\phi(z)|$ then
for $p\in [1,+\infty)$ we have
\[
 \|\phi\|_{L^p(D^r(w))}\le \left(\frac{cr^2\rho(w)^2}{|E|}\right)^{\eta\log M+\frac{1}{p}}\|\phi\|_{L^p(E)}.
\]
\end{coro}

The estimates on $\eta$ are the same as in the lemma. The constant $c$ does not depend on $r$.

\section{Proof of Theorem \ref{thm1} \label{proof} }
The key ingredient in the proof of Theorem \ref{thm1} and a main contribution of this paper is the existence and the estimate of the covering constant. 
Indeed, we have shown in Section \ref{sec-reminders} that the disks $(D^{s}(a_k))_{k\in \NN}$ cover the complex plane $\CC$ for $s>r_0/2$. Now, we shall prove that there exists a covering constant $N$ depending on the covering radius $s \geq r_0/2$. This means that the number of overlapping disks cannot exceed $N$. We denote by $\chi_F$ 
the characteristic function of a measurable set $F$ in $\DD$.
We are now in a position to prove the covering lemma. 
\begin{lem}\label{lem1}
For $s > 1$, there exists a constant $N:=N(s)$ such that
\[
 \sum_{k}\chi_{D^s(a_k)}\le N.
\]
Moreover, there exists some universal constant $c_{ov}:=c_{ov}(\phi) >0$ such that
\[
N \le c_{ov} s^{\alpha}
\]
where 
$$\alpha = \min \left[2\max\left(1+\frac{\log_2(C_\mu)}{\kappa}, \, 2\right); \ \max\left(\frac{\log_2(C_\mu)}{\kappa}, \, 1\right) \log_2(C_\mu)\right],$$
$C_\mu$ is the doubling constant and $\kappa$ is given in Lemma \ref{lem-1}.
\end{lem}

Obviously, the constant $N$ is at least equal to 1 and since $N$ is non-decreasing in $s$, we are only interested in the behavior when $s$ goes to $\infty$. We also remind the reader that in order to cover the complex plane $\CC$, we need to require that $s \ge r_0/2$. 

\begin{proof}[Proof of Lemma \ref{lem1}]
The proof is inspired by that of \cite[Lemma 2.5]{AS2011}. 
Let $s >1$ and $z \in \CC$. Denote $\Gamma(z)=\left\{ k \in \NN, \ z \in D^s(a_k) \right\}$. Our goal is to estimate $\#\Gamma(z)$. 
Write $\theta = \frac{\log_2(C_\mu)}{\kappa}$. 
Recall that for $z \in D^s(a_k)$, we have by \eqref{eq-estimate-rho} and \eqref{eq-upperbound}
\begin{equation}
\label{eq-estimates-for-rhoak}
\frac{\rho(z)}{1+s}\le\rho(a_k) \lesssim \rho(z) \max(s^{\theta -1}, 1).
\end{equation}
Now, pick $\delta >0$ such that the disks $D^{\delta}(a_n)$ are disjoint (recall that $(a_n)_{n \in \NN}$ is $\rho$-separated). By the triangular inequality, there exists a constant $C>0$ such that $D^{\delta}(a_k) \subset D^{C \max(s^\theta, s)}(z)$ whenever $z \in D^s(a_k)$ (see Figure \ref{Figure}).
Indeed, if $w \in D^\delta(a_k)$ then
\begin{align*}
|w-z| \le |w-a_k| + |z-a_k| &\le \delta \rho(a_k) + s \rho(a_k) \\
&\lesssim (\delta + s) \rho(z) \max(s^{\theta -1}, 1) \\
&\lesssim \rho(z) \max(s^\theta, s),
\end{align*}
where we have used that $\delta >0$ is fixed and $s >1$.

\begin{figure}[h!]
\begin{center}
\includegraphics[scale=0.5]{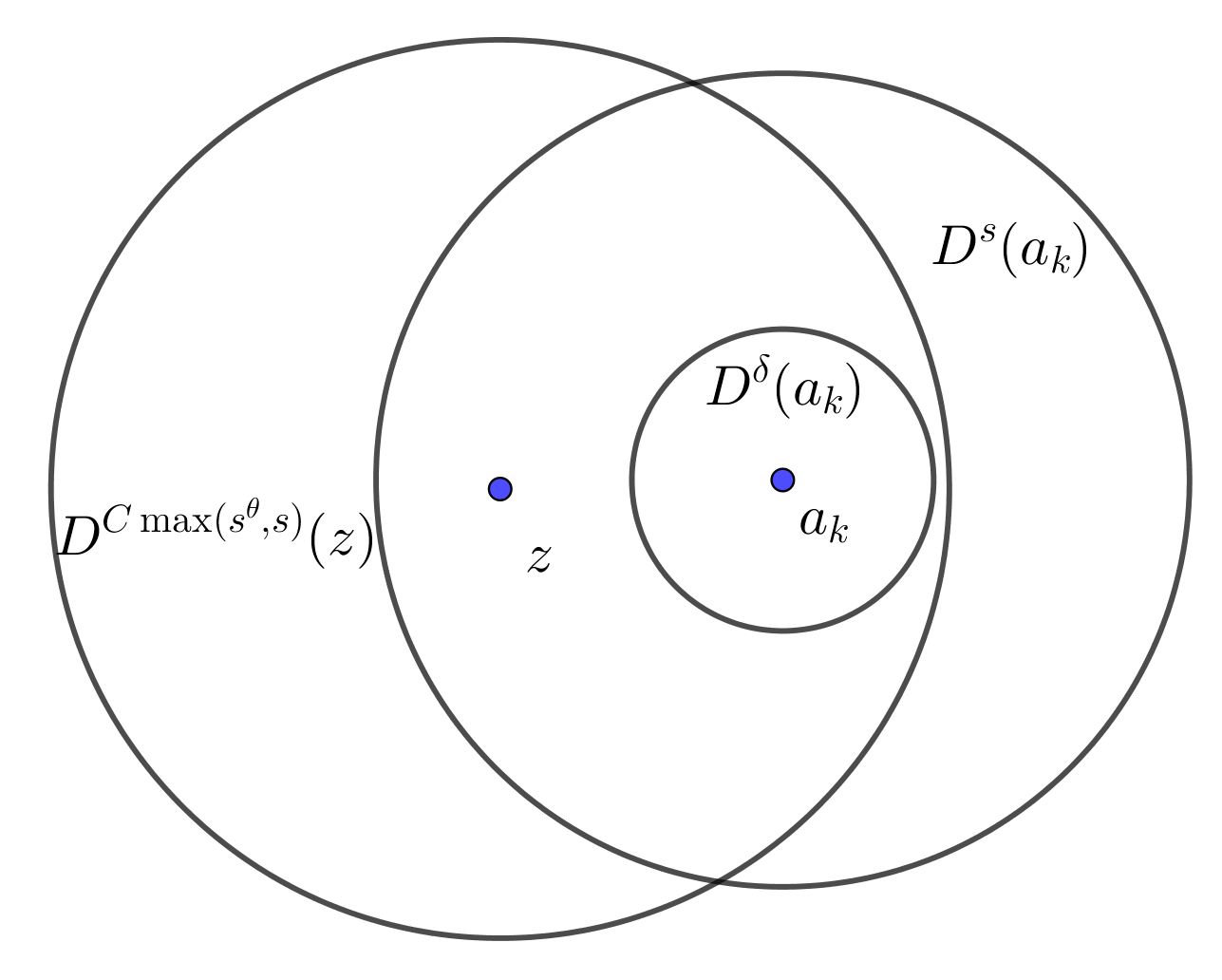}
\end{center}
\caption{The disks $D^s(a_k)$, $D^{C\max(s^\theta, s)}(z)$ and $D^\delta(a_k)$. \label{Figure}}
\end{figure}

Hence, since $D^\delta(a_k)$ are disjoint and with \eqref{eq-estimates-for-rhoak} in mind, it follows 
\begin{align*}
 \# \Gamma(z) &\le \# \left\{ k \in \NN, \ D^{\delta}(a_k) \subset D^{C\max(s^\theta, s)}(z)\right\}\\
 &\le \frac{\left| D^{C\max(s^\theta, s)}(z)\right|}{\inf_{k \in \Gamma(z)} \left|D^{\delta}(a_k)\right|}\\ 
 &= \sup_{k \in \Gamma(z)} \left( \frac{\left[C \max(s^\theta, s) \rho(z)\right]^2}{(\delta \rho(a_k))^2}\right) \\
 & \lesssim \left[\max(s^\theta, s) (1+s)\right]^2\\
 & \lesssim s^{2\max\left(1+\theta, \, 2\right)} 
 \end{align*} 
 where we have used that $\max(s^\theta, s) = s^{\max(\theta, 1)}$ for $s>1$.
 
Moreover, notice that $\mu\left[D^{\delta}(a_k)\right] \gtrsim 1$ since $0 <\delta <1$ is fixed. Hence an analogous computation, replacing the Lebesgue measure by the measure $\mu$ and using the right hand side of inequality \eqref{eq-measure}, leads to 
 \begin{align*}
 \# \Gamma(z) &\le \# \left\{ k \in \NN, \ D^{\delta}(a_k) \subset D^{C\max(s^\theta, s)}(z)\right\}\\
 &\le \frac{ \mu \left[D^{C\max(s^\theta, s)}(z)\right]}{\inf_{k \in \Gamma(z)} \mu\left[D^{\delta}(a_k)\right]}\\
&\lesssim \max(s^\theta, s)^{\log_2(C_\mu)}\\
&\lesssim s^{\max(\theta, \, 1) \log_2(C_\mu)}.
 \end{align*}
 Finally, taking the minimum between the two estimates, we get for $s>1$
 $$\# \Gamma(z) \lesssim \min\left(s^{2\max\left(1+\theta, \, 2\right)}, \, s^{\max(\theta, \, 1) \log_2(C_\mu)} \right) = s^{\min \left[2\max\left(1+\theta, \, 2\right), \, \max(\theta, \, 1) \log_2(C_\mu)\right]}.$$
Since the estimates are uniform in $z$, the lemma follows. 
\end{proof}

As in \cite{HKO} for the Bergman space, we now introduce good disks.
Fix $r_0/2 \le s<t$ where $r_0/2$ is the radius such that the disks $D^{r_0/2}(a_k)$ cover the complex plane. 
For $K>1$ the set
\[
 I_f^{K-good}=\{k:\|f\|_{L^{p}_{\phi}(D^{t}(a_k))}\le K \|f\|_{L^p_{\phi}(D^s(a_k))}\}
\]
will be called the set of $K$-good disks for $(t,s)$
(in order to keep notation light we will not include
$s$ and $t$ as indices). This set depends on $f$.
\\

The following proposition has been shown in \cite{HKO} for the Bergman space, but
its proof, implying essentially the finite overlapping property, is exactly the same.

\begin{prop}\label{prop1}
Let $r_0/2 \le s<t$.
For every constant $c\in (0,1)$, there exists $K$ such that for every $f\in \cF^p_{\phi}$ we have
\[
 \sum_{k\in I_f^{K-good}} \|f\|_{L^p_{\phi}(D^s(a_k))}^p\ge c \|f\|_{L^p_{\phi}(\CC)}^p.
\]
\end{prop}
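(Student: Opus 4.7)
The plan is to exploit the finite overlap property from Lemma \ref{lem1} together with the fact that the disks $D^s(a_k)$ cover $\CC$ (since $s \geq r_0/2$). The strategy is to bound the contribution of the bad disks by the total $p$-norm of $f$ via the overlap constant on the larger scale $t$, and then choose $K$ large enough to make this contribution arbitrarily small.

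More precisely, I would first observe that since $\sum_k \chi_{D^s(a_k)} \geq 1$ pointwise on $\CC$, summing local norms gives
\[
\sum_k \|f\|_{L^p_\phi(D^s(a_k))}^p \geq \|f\|_{p,\phi}^p.
\]
On the other hand, applying Lemma \ref{lem1} with radius $t$, the disks $D^t(a_k)$ have overlap bounded by some constant $N_t$, so
\[
\sum_k \|f\|_{L^p_\phi(D^t(a_k))}^p \leq N_t \|f\|_{p,\phi}^p.
\]
These two inequalities are the only structural ingredients needed.

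Next, by the very definition of $I_f^{K\text{-good}}$, any index $k$ outside this set satisfies $\|f\|_{L^p_\phi(D^s(a_k))}^p < K^{-p} \|f\|_{L^p_\phi(D^t(a_k))}^p$. Splitting the sum over all $k$ into good and bad indices gives
\[
\sum_{k\in I_f^{K\text{-good}}} \|f\|_{L^p_\phi(D^s(a_k))}^p \geq \|f\|_{p,\phi}^p - K^{-p}\sum_{k\notin I_f^{K\text{-good}}}\|f\|_{L^p_\phi(D^t(a_k))}^p \geq \bigl(1 - K^{-p} N_t\bigr)\|f\|_{p,\phi}^p.
\]
It then suffices to pick $K$ so that $K^p \geq N_t/(1-c)$, which yields the claimed estimate. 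There is no real obstacle here: the argument is essentially the same as in \cite{HKO}, and the only point where the doubling Fock setting enters is through Lemma \ref{lem1}, which guarantees a finite (and explicitly bounded) overlap constant $N_t$ for the enlarged disks $D^t(a_k)$.
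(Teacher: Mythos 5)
Your proof is correct and is precisely the argument the paper has in mind: the paper defers to \cite{HKO} and only records the choice $K^p = N(t)/(1-c)$, which is exactly what your computation produces from the covering lower bound at scale $s$ and the finite-overlap upper bound at scale $t$ from Lemma \ref{lem1}.
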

One can pick $K^p = N(t)/(1-c)$ where $N(t)$ corresponds 
to the overlapping constant from Lemma \ref{lem1} for the radius $t$.
\\

We are now in a position to prove the theorem.
\begin{proof}[Proof of Theorem \ref{thm1}]
Take $s=\max(r, \, r_0/2)$ and $t=4s$. As noted in \cite{HKO}, if $E$ is $(\gamma, r)$-dense then $E$ is $(\widetilde{\gamma}, s)$-dense, where $\tilde{\gamma}= c \gamma$ and $c$ is a multiplicative constant. Hence we can assume that $E$ is $(\gamma, s)$-dense instead of $(\gamma, r)$-dense. 

Let $\mathsf{h}_{a_k}$ be the harmonic function introduced in \eqref{estim} for $\sigma=t$. 
Since $\mathsf{h}_{a_k}$ is harmonic on $D^t(a_k)$, there exists a function $H_{a_k}$ holomorphic on $D^t(a_k)$ with real part $\mathsf{h}_{a_k}$.

Now, given $f$ with $\|f\|_{L^p_{\phi}(\CC)}=1$, let $g=fe^{-\left( H_{a_k}+\phi(a_k)\right)}$ for $k\in I_f^{K-good}$, and set
\[
 h=c_0g, \quad c_0=\left(\frac{\pi s^2\rho(a_k)^2}{\int_{D^{s}(a_k)}|g|^pdA}\right)^{1/p}.
\]
Clearly there is $z_0\in D^{s}(a_k)$ with $|h(z_0)|\ge 1$. 

Set $R=2s$. We have to estimate the maximum modulus of $h$ on $D^R(a_k)$ in terms
of a local integral of $h$. To that purpose, we use that $h\in  A^p(D^{t}(a_k))$. Indeed, applying \eqref{estim} we have 
\beqa
\int_{D^{t}(a_k)}|h|^pdA
 &=& \frac{\pi s^2\rho(a_k)^2}{\int_{D^{s}(a_k)}|g|^pdA} \int_{D^{t}(a_k)}|g|^pdA\\
 &=& \frac{\pi s^2\rho(a_k)^2}{\int_{D^{s}(a_k)}|f|^p e^{-p(\mathsf{h}_{a_k}+\phi(a_k))}dA} \int_{D^{t}(a_k)}|f|^p e^{-p(\mathsf{h}_{a_k}+\phi(a_k))}dA\\
 &\le& \frac{\pi s^2\rho(a_k)^2 e^{2pA(t)}}{\int_{D^{s}(a_k)}|f|^pe^{-p\phi(z)}dA(z)} \int_{D^{t}(a_k)}|f|^pe^{-p\phi(z)}dA(z). \\
 &\le& \pi s^2\rho(a_k)^2 e^{2pA(t)} K^p
\eeqa
where the last inequality comes from the fact that $k$ is $K$-good for $(s,t)$. Therefore, taking into account this last estimate, the subharmonicity of $|h|^p$ yields
\begin{equation*}
 M^p:=\max_{z\in D^R(a_k)}|h(z)|^p \le \frac{1}{\pi s^2\rho(a_k)^2} \int_{D^{t}(a_k)}|h|^pdA \le  c_{s}K^p
\end{equation*}
where $c_{s}= e^{2pA(4s)}$ since $t=4s$. 

Now, setting $\tilde{E}=E\cap D^{s}(a_k)$ 
we get using Corollary \ref{CoroKov} applied to $h$:
\[
\int_{D^{s}(a_k)}|h(z)|^pdA(z) \\
 \le\left(\frac{cs^2\rho(a_k)^2}{|\tilde{E}|}\right)^{p\eta\log M+1}
 \int_{\tilde{E}}|h(z)|^pdA(z)\\
\]
Again, by homogeneity we can replace in the above inequality $h$ by $g$.
Note also that $\pi s^2\rho(a_k)^2/|\tilde{E}|$ is controlled by $1/\gamma$.
This yields

\beqa
 \int_{D^{s}(a_k)}|f|^pe^{-p\phi(z)}dA(z)&\le& e^{pA(t)}\int_{D^{s}(a_k)}|g(z)|^pdA(z)\\
&\le& e^{pA(t)}\left(\frac{cs^2\rho(a_k)^2}{|\tilde{E}|}\right)^{p\eta\log M+1}
 \int_{\tilde{E}}|g(z)|^pdA(z)\\
&\le& e^{pA(t)}\left(\frac{c_1}{\gamma}\right)^{p\eta\log M+1}\int_{\tilde{E}}|g(z)|^pdA(z)\\
&\le& e^{2pA(t)} \left(\frac{c_1}{\gamma}\right)^{p\eta\log M+1} \int_{\tilde{E}}|f(z)|^pe^{-p\phi(z)} dA(z),
\eeqa
where $c_1$ is an absolute constant.

Summing over all $K$-good $k$, and using Lemma \ref{lem1} and Proposition \ref{prop1} we obtain the required result
\[
  c \|f\|_{L^p_{\phi}(\CC)}\lesssim
 \left(\frac{c_1}{\gamma}\right)^{\eta\log M+1/p}\|f\|_{L_{\phi}^{p}(E)}
\]
where in view of Lemma \ref{Kovr-2D}
\[
\eta \leq c''\times 2^4 \log 2
\] 
and
\begin{align*}
\log(M)&\le \log(c_s^{1/p} K) =\log\left(\left(c_s \frac{N(4s)}{1-c}\right)^{\frac{1}{p}}\right) \le \frac{1}{p} \log\left(e^{2pA(4s)} c_{ov} \frac{(4s)^{\alpha}}{1-c} \right).
\end{align*}
Here $c$ comes from Proposition \ref{prop1}, and 
$c_{ov}$ and $\alpha$ from  Lemma \ref{lem1}. In particular, fixing \\
$c \in (0, \, 1)$ we obtain 
$$\log M \le 2A(4s) + \frac{1}{p}( C' + C'' \log(s))$$
where $C'$, $C''$ depend only on the space. 
In view of \eqref{eq-A-sigma}, we get 
$$\log M \lesssim s^{\log_2(C_\mu)} + \frac{1}{p}(1 +  \log(s)).$$ 
Finally, noticing that $r \asymp s=\max(r, r_0/2)$ for $r>1$, this implies 
$$\log M \lesssim r^{\log_2(C_\mu)} + \frac{1}{p}(1 +  \log(r)).$$
\end{proof}

\vspace{0.6cm}

Finally, we give a short proof of the reverse implication in Corollary \ref{cor1} which is a straightforward adaptation to the doubling Fock space of Luecking's proof of \cite[Corollary 3]{Lu1}.
\begin{proof}
First, assume that $\varphi \leq 1$. Therefore $s \leq 1$. 
Since $E$ is $(\gamma, r)$-dense, it is a dominating set. So 
$$\int_\CC \varphi^2 |f|^2 e^{-2 \phi} dA \geq s^2 \int_E |f|^2  e^{-2 \phi} dA \geq s^2 C^2 \|f\|^2_{L^2_{\phi}(\CC)},  $$
where $C$ is the sampling constant.  
Then
\begin{align*} 
\|(I-T_\varphi) f\|^2_{L^2_{\phi}(\CC)} &= \|T_{1-\varphi}f\|^2_{L^2_{\phi}(\CC)}\\
&= \|{\bf{P}}[(1-\varphi)f]\|^2_{L^2_{\phi}(\CC)} \\
&\leq \|(1-\varphi)f\|^2_{L^2_{\phi}(\CC)} \\
& \leq \int_\CC (1-\varphi^2)|f|^2 e^{-2\phi}dA \\
& \leq (1-s^2C^2)\|f\|^2_{L^2_{\phi}(\CC)}
\end{align*}
Hence $\| I-T_\varphi \| <1$. So $T_\varphi$ is invertible and 
$$\|T_\varphi^{-1}\|\leq \frac{1}{1- \| I-T_\varphi \|} \leq \frac{1}{1- \sqrt{1-s^2C^2}}.$$
Using Theorem \ref{thm1}, we obtain 
$$||T_\varphi^{-1}|| \le \frac{1}{1-\sqrt{1-s^2 \left(\frac{\gamma}{c}\right)^{2L}}}.$$ 
Finally, for a general $\varphi$, let $\psi = \frac{\varphi}{\| \varphi\|_{\infty}}$. Then $\psi \leq 1$, $E=\{\psi \geq \frac{s}{\| \varphi\|_{\infty}}=s'\}$ and $T_\varphi=\| \varphi\|_{\infty} T_\psi$. So, applying the previous discussion to $\psi$, it follows
$$\|T_\varphi^{-1}\| = \|T_\psi^{-1}\| \| \varphi\|^{-1}_{\infty} \le \frac{\| \varphi\|^{-1}_{\infty}}{1-\sqrt{1-(s')^2 \left(\frac{\gamma}{c}\right)^{2L}}} = \frac{\| \varphi\|^{-1}_{\infty}}{1-\sqrt{1-\left(\frac{s}{\|\varphi\|_{\infty}}\right)^2 \left(\frac{\gamma}{c}\right)^{2L}}}.$$
\end{proof}

\vspace{1cm}

\textbf{Acknowledgment.} This work is part of the first named author's thesis supervised
by Andreas Hartmann. The authors would like to thank him for many helpful discussions. 

\bibliographystyle{alpha}
\bibliography{biblio} 

\end{document}